\documentclass[11pt]{article}
\usepackage[utf8]{inputenc}
\usepackage{amssymb,amsmath,amsfonts,amsthm,mathrsfs}
\usepackage{graphicx,graphics,psfrag,epsfig,cancel}
\usepackage[colorlinks=true, pdfstartview=FitV, linkcolor=blue, citecolor=blue, urlcolor=blue]{hyperref}
\usepackage[numbers,comma,square,sort&compress]{natbib}
\usepackage{dsfont}

\usepackage{caption,subfig,xcolor}
\usepackage{tikz,tkz-tab}
\usetikzlibrary{calc}

\usetikzlibrary{arrows,shapes,positioning}
\usetikzlibrary{decorations.markings}
\tikzstyle arrowstyle=[scale=1.25]
\tikzstyle directed=[postaction={decorate,decoration={markings,
    mark=at position .6 with {\arrow[arrowstyle]{stealth}}}}]
\tikzstyle reverse directed=[postaction={decorate,decoration={markings,
    mark=at position .5 with {\arrowreversed[arrowstyle]{stealth};}}}]

\usepackage{geometry}
\geometry{hmargin=2cm,vmargin=3cm}

\newtheorem{assumption}{Assumption}

\newtheorem{lemma}{Lemma}

\newcommand{\N}{{\mathbb N}}
\newcommand{\Z}{{\mathbb Z}}
\newcommand{\R}{{\mathbb R}}

\newcommand{\cercle}{{\mathbb S}^1}

\newcommand{\bqs}{\begin{equation*}}
\newcommand{\eqs}{\end{equation*}}
\newcommand{\bqq}{\begin{equation}}
\newcommand{\eqq}{\end{equation}}

\newcommand{\mbi}{\mathbf{i}}

\title{Amplification of numerical wave packets \\
for transport equations with two boundaries}

\author{Romain {\sc Bonnet-Eymard}\thanks{\'Ecole Normale Supérieure Paris-Saclay, 4 avenue des Sciences, 91190, Gif-Sur-Yvette, France. 
Email : {\tt rbonnete@ens-paris-saclay.fr}}, Jean-Fran\c{c}ois {\sc Coulombel} \& Gr\'egory {\sc Faye}\thanks{Institut de Math\'ematiques 
de Toulouse - UMR 5219, Universit\'e de Toulouse ; CNRS, Universit\'e Paul Sabatier, 118 route de Narbonne, 31062 Toulouse Cedex 9 , France. 
Research of J.-F. C. was supported by ANR project HEAD under grant agreement ANR-24-CE40-3260. G.F. acknowledges support from Labex 
CIMI under grant agreement ANR-11-LABX-0040, from ANR project Indyana under grant agreement ANR-21-CE40-0008 and from ANR project 
HEAD under grant agreement ANR-24-CE40-3260. Emails: {\tt jean-francois.coulombel@math.univ-toulouse.fr}, 
{\tt gregory.faye@math.univ-toulouse.fr}}}
\date{\today}

\begin{document}

\maketitle

\begin{abstract}
The purpose of this note is to investigate the coupling of Dirichlet and Neumann numerical boundary conditions for the transport equation set on 
an interval. When one starts with a stable finite difference scheme on the lattice $\Z$ and each numerical boundary condition is taken separately 
with the Neumann extrapolation condition at the \emph{outflow} boundary, the corresponding numerical semigroup on a \emph{half-line} is known 
to be bounded. It is also known that the coupling of such numerical boundary conditions on a \emph{compact interval} yields a \emph{stable} 
approximation, even though large time exponentially growing modes may occur. We review the different stability estimates associated with these 
numerical boundary conditions and give explicit examples of such exponential growth phenomena for finite difference schemes with ``small'' stencils. 
This provides numerical evidence for the optimality of some stability estimates on the interval.
\end{abstract}

\section{Introduction}
\label{section1}

We consider the following transport problem. Given a positive velocity $a>0$ and an interval length $L>0$, we consider the transport equation at 
velocity $a$ on the interval $(0,L)$ with homogeneous Dirichlet condition at the incoming boundary (that is, at $x=0$ here since $a$ is positive):
\begin{equation}
\label{transport}
\begin{cases}
\partial_t u +a \, \partial_x u \, = \, 0 \, ,& t \ge 0 \, ,\, x \in (0,L) \, ,\\
u|_{t=0} \, = \, f \, ,& x \in (0,L) \, ,\\
u|_{x=0} \, = \, 0 \, ,& t \ge 0 \, .
\end{cases}
\end{equation}
It is understood that $f$ is a given real valued function on $(0,L)$ that is, say, at least continuous on the closed interval $[0,L]$ with $f(0)=0$ so 
that the compatibility condition at the corner $t=x=0$ is satisfied. Extending $f$ by $0$ on the set $\R^-$ of negative real numbers, the solution 
to \eqref{transport} is given by the formula:
\begin{equation}
\label{solution}
\forall \, t \ge 0 \, ,\quad \forall \, x \in (0,L) \, ,\quad u(t,x) \, = \, f(x-a \, t) \, .
\end{equation}
In particular, the solution $u(t,\cdot)$ vanishes on $(0,L)$ for $t \ge L/a$.

The goal of this note is to investigate the numerical counterpart of this rather trivial problem. We do not aim at the most general framework and 
therefore make several simplifying assumptions. First of all, we consider once and for all a fixed parameter $\lambda>0$. Given the interval 
length $L>0$, we consider an integer $J \ge 1$ that is meant to be \emph{large}, and we define the \emph{space step} $\Delta x :=L/(J+1)$. 
The grid points are labeled as $x_j:=j \, \Delta x$ for any $j \in \Z$, in such a way that the interval $(0,L)$ is divided into the cells $(x_j,x_{j+1})$ 
with $j=0,\dots,J$. The \emph{time step} $\Delta t$ is then defined as $\Delta t:=\lambda \, \Delta x$. For a given integer $n \in \N$ and 
$j \in \{ 0,\dots,J \}$, we let $u_j^n$ denote the approximation of the solution to \eqref{transport} within the cell $(x_j,x_{j+1}) \times (n \Delta t,
(n+1)\Delta t)$. The considered numerical scheme will update the vector $(u_0^n,\dots,u_J^n)$ into a new vector $(u_0^{n+1},\dots,u_J^{n+1})$ 
for each $n \in \N$.

For convenience, we introduce the following notation for the discrete difference operators with respect to the spatial index $j$. Given a collection 
of three real numbers $u_{j-1},u_j,u_{j+1}$, the discrete derivatives $(D_-u)_j$ and $(D_+u)_j$ at the index $j$ are defined as:
$$
(D_-u)_j \, := \, u_j-u_{j-1} \, ,\quad (D_+u)_j \, := \, u_{j+1}-u_j \, ,
$$
and we view $D_-,D_+$ as operators acting on either sequences or vectors meaning that we allow ourselves to iterate them. For simplicity, we 
shall omit most of the time to write the brackets and simply use the notation $D_-u_j$ or $D_+u_j$. Such notation is used, of course, at any 
integer $j$ for which the action of the operator makes sense. As an example, we have:
$$
D_-^2u_j \, = \, u_j-2 \, u_{j-1}+u_{j-2} \, ,
$$
and larger powers of $D_-$ or $D_+$ can be computed by making appeal to binomial coefficients.

\paragraph{The considered numerical scheme.}

We consider two integers $r,p \in \N$ and some real coefficients $a_{-r},\dots,a_p$ with $a_{-r} \neq 0$ and $a_p \neq 0$. These coefficients are 
assumed to depend only on $\lambda$ and $a$. They should satisfy the \emph{consistency} conditions:
\begin{equation}
\label{consistency}
\sum_{\ell=-r}^p a_\ell \, = \, 1 \, ,\quad \sum_{\ell=-r}^p \ell \, a_\ell \, = \, - \lambda \, a \, .
\end{equation}
The numerical scheme in the \emph{interior} domain reads:
\begin{equation}
\label{schema-int}
\forall \, j=0,\dots,J \, ,\quad u_j^{n+1} \, = \, \sum_{\ell=-r}^p \, a_\ell \, u_{j+\ell}^n \, .
\end{equation}
It is understood that $r$ and $p$ are fixed while the number $J$ of cells may get arbitrarily large. The update \eqref{schema-int} from the discrete 
time $n$ to the following time level $n+1$ is possible only if we have the \emph{ghost cell} values $u_{-r}^n,\dots,u_{-1}^n$ and $u_{J+1}^n,\dots,
u_{J+p}^n$ at our disposal. The ghost cell values on the left of the interval are meant to provide for approximations of the trace of the solution so 
it seems reasonable, in view of \eqref{transport}, to impose the following homogeneous Dirichlet condition:
\begin{equation}
\label{schema-Dirichlet}
\forall \, n \in \N \, ,\quad \forall \, \mu=-r,\dots,-1 \, ,\quad u_\mu^n \, = \, 0 \, .
\end{equation}
This is not the only option but it will be the one we choose here for the sake of simplicity. Prescribing numerical boundary conditions on the right 
of the interval is a little more tricky since the continuous problem \eqref{transport} does not impose anything at first glance. We follow here the 
extrapolation procedure that has been analyzed in \cite{Kreiss-proc,Goldberg,CL-KRM} and other works. We thus consider a fixed integer $k \in \N$ 
and define the ghost cell values $u_{J+1}^n,\dots,u_{J+p}^n$ by imposing:
\begin{equation}
\label{schema-Neumann}
\forall \, n \in \N \, ,\quad \forall \, \mu=1,\dots,p \, ,\quad D_-^ku_{J+\mu}^n \, = \, 0 \, .
\end{equation}
The numerical boundary conditions can be understood as follows: the condition \eqref{schema-Neumann} determines $u_{J+1}^n$ in terms of 
interior values $u_{J+1-p}^n,\dots,u_J^n$ that are known in advance (take first $\mu=1$ in \eqref{schema-Neumann}). One then determines iteratively 
$u_{J+2}^n,\dots,u_{J+p}^n$ as when one solves a lower triangular linear system. As for $r$ and $p$, it is understood that $k$ is fixed while $J$ is 
large and satisfies at least $J+1-k \ge 0$ so that the ghost cell values $u_{J+1}^n,\dots,u_{J+p}^n$ are all determined from \eqref{schema-Neumann} 
by using the interior values $u_0^n,\dots,u_J^n$.

The numerical scheme \eqref{schema-int}, \eqref{schema-Dirichlet}, \eqref{schema-Neumann} is initialized by considering:
$$
\forall \, j=0,\dots,J \, ,\quad u_j^0 \, := \, \dfrac{1}{\Delta x} \, \int_{x_j}^{x_{j+1}} \, f(x) \, {\rm d}x \, ,
$$
where we recall that $f$ stands for the initial condition in \eqref{transport} and $x_0=0$, $x_{J+1}=L$. The ghost cell values $u_{-r}^0,\dots,u_{-1}^0$ 
and $u_{J+1}^0,\dots,u_{J+p}^0$ at the initial time $n=0$ are determined by \eqref{schema-Dirichlet} and \eqref{schema-Neumann}.
\bigskip

There are two ways to consider and implement the numerical scheme \eqref{schema-int}, \eqref{schema-Dirichlet}, \eqref{schema-Neumann}. 
One can either consider it as a time iteration on the vector $(u_0^n,\dots,u_J^n) \in \R^{J+1}$ and write it as:
\begin{equation}
\label{schema}
\forall \, n \in \N \, ,\quad \begin{pmatrix}
u_0^{n+1} \\
\vdots \\
u_J^{n+1} \end{pmatrix} \, = \, A \, \begin{pmatrix}
u_0^n \\
\vdots \\
u_J^n \end{pmatrix} \, ,
\end{equation}
for a convenient square matrix $A \in \mathscr{M}_{J+1}(\R)$. Or one can consider \eqref{schema-int}, \eqref{schema-Dirichlet}, 
\eqref{schema-Neumann} as a time iteration on the extended vector $(u_{-r}^n,\dots,u_{J+p}^n) \in \R^{J+p+r+1}$ but the set of all possible 
initial conditions is submitted to the restrictions imposed by \eqref{schema-Dirichlet} or \eqref{schema-Neumann} so the stability problem is 
less easy to rewrite in that framework (though the implementation might be easier). We shall therefore consider here the formulation \eqref{schema}, 
which amounts to considering all ghost cell values as auxiliary data that are necessary in the iteration process but that are not meaningful in 
the stability analysis.
\bigskip

The main stability problem that we investigate here is inspired from \cite{Benoit} and amounts to determining whether the iteration matrix 
$A$ is power bounded:
\begin{equation}
\label{stability}
\sup_{n \in \N} \, \| A^n \| \, < \, +\infty \, ,
\end{equation}
where $\| \cdot \|$ is, at this stage, \emph{any} norm on $\mathscr{M}_{J+1}(\R)$ since all norms are equivalent on that space. As is well-known, 
a necessary condition for \eqref{stability} to hold is that the spectral radius of $A$ should not be larger than $1$. We do not discuss here the 
choice of the norm and the dependence on the (large) dimension $J$ even though this issue would be extremely meaningful in a discussion 
about convergence estimates. Our primary focus here is to determine whether the spectral radius of $A$ can exceed $1$.
\bigskip

We first review some known stability estimates for the numerical scheme \eqref{schema-int}, \eqref{schema-Dirichlet}, \eqref{schema-Neumann} 
and the two related problems on a half-line. We then present some numerical evidence for the existence of exponentially growing numerical wave 
packets that may occur even for the very first examples $k=1$ (first order extrapolation) or $k=2$ (second order extrapolation). These examples 
indicate that the stability condition exhibited in \cite{Benoit} is not automatically satisfied in the case of the Dirichlet and Neumann condition when 
the stencil of the numerical scheme is arbitrary.

\section{A reminder on Dirichlet and Neumann numerical boundary conditions}
\label{section2}

\subsection{Three point schemes}

We first recall why the case of three point schemes and $k=1$ (first order extrapolation) can be easily handled. We consider the case $p=r=1$, 
so that the three coefficients $a_{-1},a_0,a_1$ satisfy \eqref{consistency}. The analysis below even allows $p$ (and therefore $a_1$) to be zero. 
We can equivalently rewrite the iteration \eqref{schema-int} as:
\begin{equation}
\label{schema-3pts}
\forall \, j=0,\dots,J \, ,\quad 
u_j^{n+1} \, = \, u_j^n \, - \, \dfrac{\lambda \, a}{2} \, (u_{j+1}^n-u_{j-1}^n) \, + \, \dfrac{\nu}{2} \, (u_{j+1}^n-2\, u_j^n+u_{j-1}^n) \, ,
\end{equation}
where $\nu \in \R$ is a parameter that may depend on $\lambda$ and $a$. Typical choices are:
\begin{itemize}
 \item $\nu=1$ ; one then obtains the so-called Lax-Friedrichs scheme.
 \item $\nu=\lambda \, a$ ; one then obtains the so-called upwind scheme (for which $p=0$ and $a_1=0$).
 \item $\nu=(\lambda \, a)^2$ ; one then obtains the so-called Lax-Wendroff scheme.
\end{itemize}

The interior scheme \eqref{schema-3pts} is combined with the Dirichlet condition \eqref{schema-Dirichlet} on the left of the interval, that is, 
$u_{-1}^n=0$, and the first order extrapolation (or Neumann) condition \eqref{schema-Neumann} with $k=1$ on the right of the interval, that 
is, $u_{J+1}^n=u_J^n$. Setting:
$$
a_{-1} \, := \, \dfrac{\lambda \, a+\nu}{2} \, ,\quad a_0 \, := \, 1-\nu \, ,\quad a_1 \, := \, \dfrac{-\lambda \, a+\nu}{2} \, ,
$$
the associated iteration matrix $A$ in \eqref{schema} reads:
\begin{equation}
\label{A-3pts}
A \, = \, \begin{pmatrix}
a_0 & a_1 & 0 & \cdots & \cdots & \cdots & 0 \\
a_{-1} & a_0 & a_1 & \ddots & & & \vdots \\
0 & \ddots & \ddots & \ddots & \ddots & & \vdots \\
\vdots & \ddots & \ddots  & \ddots & \ddots & \ddots & \vdots \\
\vdots & & \ddots & a_{-1} & a_0 & a_1 & 0 \\
\vdots & & & 0 & a_{-1} & a_0 & a_1 \\
0 & \cdots & \cdots & \cdots & 0 & a_{-1} & a_0+a_1 \end{pmatrix} \in \mathscr{M}_{J+1}(\R) \, .
\end{equation}
A straightforward result is the following:

\begin{lemma}
\label{lem1}
Let the parameters in \eqref{schema-3pts} satisfy:
$$
\lambda \, a \in [0,1] \, ,\quad \nu \in [(\lambda \, a)^2,1] \, .
$$
Then the matrix $A$ in \eqref{A-3pts} is power bounded.
\end{lemma}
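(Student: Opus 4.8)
The plan is to prove the stronger statement that $A$ is a contraction for the Euclidean norm $\|u\|^2 := \sum_{j=0}^J u_j^2$, which immediately yields $\|A^n\| \le 1$ for every $n$ and, as a bonus, a bound that is uniform in the number of cells $J$. To this end I would first rewrite the interior update \eqref{schema-3pts} in the incremental form $u_j^{n+1} = u_j^n + a_1 \, D_+ u_j^n - a_{-1} \, D_- u_j^n$, with $a_{-1} = (\lambda a + \nu)/2$ and $a_1 = (\nu - \lambda a)/2$, and incorporate the two boundary conditions through the conventions $u_{-1}^n = 0$ (Dirichlet) and $u_{J+1}^n = u_J^n$ (first order extrapolation), the latter being equivalent to $D_+ u_J^n = 0$.

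Next I would expand $\|u^{n+1}\|^2 - \|u^n\|^2 = 2 \sum_{j=0}^J u_j^n \, \delta_j + \sum_{j=0}^J \delta_j^2$, where $\delta_j := u_j^{n+1} - u_j^n = a_1 \, D_+ u_j^n - a_{-1} \, D_- u_j^n$, and carry out discrete summations by parts. Writing $d_j := D_- u_j^n$ (so that $d_0 = u_0^n$ and $d_{J+1} = 0$ by the boundary conventions), the linear-in-$\delta$ part collapses, after cancellation of the interior fluxes, to a bulk dissipation term $-\nu \sum_j d_j^2$ plus boundary contributions, while the quadratic part $\sum_j \delta_j^2$ reassembles into a tridiagonal Toeplitz form in the increments $d_j$. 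Collecting everything, I expect the identity to take the shape $\|u^{n+1}\|^2 - \|u^n\|^2 = \big[(a_1^2 + a_{-1}^2) - \nu\big] \sum_{j=0}^J d_j^2 - 2 a_1 a_{-1} \sum_{j=0}^{J-1} d_j d_{j+1} - a_1^2 (u_0^n)^2 - \lambda a \, (u_J^n)^2$, in which the two genuinely boundary terms are already non-positive because $\lambda a \ge 0$.

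It then remains to show that the bulk quadratic form is negative semi-definite. Using the elementary bound $2|d_j d_{j+1}| \le d_j^2 + d_{j+1}^2$ one controls the off-diagonal coupling and finds that the form is dominated by $\big[(|a_1| + |a_{-1}|)^2 - \nu\big] \sum_j d_j^2$, so the whole increment is non-positive as soon as $(|a_1| + |a_{-1}|)^2 \le \nu$. The final step is to check this single inequality under the hypotheses: when $\nu \ge \lambda a$ one has $|a_1| + |a_{-1}| = \nu$ and the requirement reduces to $\nu \le 1$, whereas when $(\lambda a)^2 \le \nu < \lambda a$ one has $|a_1| + |a_{-1}| = \lambda a$ and the requirement reduces to $(\lambda a)^2 \le \nu$. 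The main obstacle, and the reason a more naive argument does not suffice, is precisely this second regime: there $a_1 < 0$ (this already happens for the Lax--Wendroff scheme), the coefficients are no longer all non-negative, the row- and column-sum norms of $A$ exceed $1$, so the maximum-principle shortcut fails and one really needs the $\ell^2$ energy balance together with the favorable sign of the extrapolation boundary term $-\lambda a \, (u_J^n)^2$.
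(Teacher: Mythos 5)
Your proof is correct, and it establishes the same strengthened conclusion as the paper: the contraction bound $\|A\|\le 1$ for the operator norm induced by the Euclidean norm, hence power boundedness uniformly in $J$. I checked your central identity and it is exact: with $d_j:=D_-u_j^n$, $d_0=u_0^n$, $d_{J+1}=0$, summation by parts on the linear term gives
\begin{equation*}
2\,\sum_{j=0}^J u_j^n\,\delta_j \, = \, -(a_1+a_{-1})\,\sum_{j=0}^J d_j^2 \, + \, (a_1-a_{-1})\,(u_J^n)^2
\, = \, -\nu\,\sum_{j=0}^J d_j^2 \, - \, \lambda\,a\,(u_J^n)^2 \, ,
\end{equation*}
while the quadratic term $\sum_j \delta_j^2$ produces $(a_1^2+a_{-1}^2)\sum_{j=0}^J d_j^2-2\,a_1a_{-1}\sum_{j=0}^{J-1}d_jd_{j+1}-a_1^2\,(u_0^n)^2$, so your formula and your two case checks ($|a_1|+|a_{-1}|=\nu$ when $\nu\ge\lambda a$, $|a_1|+|a_{-1}|=\lambda a$ when $\nu<\lambda a$) close the argument. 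The route, however, differs from the paper's in the algebraic organization. The paper works with a pointwise identity for $v_j^2-u_j^2$ whose bulk terms are $-\tfrac{\nu-(\lambda a)^2}{2}\big[(D_-u_j)^2+(D_+u_j)^2\big]$ and $+\tfrac{\nu^2-(\lambda a)^2}{4}\,(D_+D_-u_j)^2$ plus telescopic terms; after summation the boundary contribution is $-\lambda a\,u_J^2-\tfrac{\nu(1-\lambda a)}{2}\,u_0^2$ (note: a different splitting of bulk versus boundary than yours, though both identities are exact), and nonpositivity is immediate for $\nu\in[(\lambda a)^2,\lambda a]$ while the case $\nu\in[\lambda a,1]$ uses $(D_+D_-u_j)^2\le 2(D_-u_j)^2+2(D_+u_j)^2$. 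You instead reduce everything to a tridiagonal quadratic form in the first differences $d_j$ and control the coupling by Young's inequality, arriving at the single sufficient condition $(|a_1|+|a_{-1}|)^2\le\nu$. What the paper's decomposition buys is dissipation coefficients ($\nu-(\lambda a)^2$, i.e.\ the distance to Lax--Wendroff) that generalize to wider stencils, as indicated by its reference to more general decompositions of the same kind. What your version buys is a more mechanical derivation, a single scalar criterion in which the two hypotheses $\nu\le 1$ and $\nu\ge(\lambda a)^2$ enter symmetrically through the sign of $a_1$, and a correct diagnosis of why cheaper arguments fail: when $(\lambda a)^2\le\nu<\lambda a$ one has $a_1<0$, the interior row sums of $|A|$ equal $1+\lambda a-\nu>1$, so no maximum-principle or $\ell^\infty$ shortcut is available and the $\ell^2$ energy balance with the favorable boundary term $-\lambda a\,(u_J^n)^2$ is genuinely needed.
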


This is, to some extent, the most favorable case where the coupling of Dirichlet and Neumann conditions at the inflow and outflow boundaries 
yields a straightforward stability estimate.

\begin{proof}[Proof of Lemma \ref{lem1}]
We introduce the standard $\ell^2$-norm on $\R^{J+1}$:
$$
\forall \, X \in \R^{J+1} \, ,\quad |X|^2 \, := \, \sum_{j=0}^J \, X_j^2 \, ,
$$
and let $\| \cdot \|$ denote the associated matrix norm on $\mathscr{M}_{J+1}(\R)$. The argument below will yield $\| A \| \le 1$, showing in 
particular that $A$ is power bounded since $\| \cdot \|$ is a matrix norm. The bound for the powers of $A$ is even uniform here with respect 
to the dimension $J$.

We consider a vector $(u_0,\dots,u_J) \in \R^{J+1}$ and define the ghost cell values $u_{-1}:=0$ and $u_{J+1}:=u_J$. We then define the vector 
$(v_0,\dots,v_J) \in \R^{J+1}$ as:
$$
\begin{pmatrix}
v_0 \\
\vdots \\
v_J \end{pmatrix} \, = \, A \, \begin{pmatrix}
u_0 \\
\vdots \\
u_J \end{pmatrix} \, ,
$$
so that we equivalently have:
$$
\forall \, j=0,\dots,J \, ,\quad v_j \, = \, a_{-1} \, u_{j-1} +a_0 \, u_j +a_1 \, u_{j+1} \, .
$$
In order to show that the norm of $A$ is less than $1$, it is sufficient to show the following inequality:
\begin{equation}
\label{ineg-lem1}
\sum_{j=0}^J \, v_j^2 \, \le \, \sum_{j=0}^J \, u_j^2 \, .
\end{equation}
Furthermore, from the expression of $v_j$ and straightforward algebraic manipulations, we find the relation\footnote{More general decompositions 
of the same kind can be found in \cite{CL-KRM}.}:
\begin{align*}
v_j^2 \, - \, u_j^2 \, =& \, -\dfrac{\nu-(\lambda \, a)^2}{2} \, \Big( (u_j-u_{j-1})^2+(u_{j+1}-u_j)^2 \Big) 
+\dfrac{\nu^2-(\lambda \, a)^2}{4} \, (u_{j+1}-2 \, u_j+u_{j-1})^2 \\
& \, -(\lambda \, a) \, u_j^2 +\dfrac{\nu \, (1-\lambda \, a)}{2} \, (u_{j+1}-u_j)^2 +(\nu-\lambda \, a) \, u_j \, (u_{j+1}-u_j) \\
& \, +(\lambda \, a) \, u_{j-1}^2 -\dfrac{\nu \, (1-\lambda \, a)}{2} \, (u_j-u_{j-1})^2 -(\nu-\lambda \, a) \, u_{j-1} \, (u_j-u_{j-1}) \, ,
\end{align*}
where the two last lines on the right-hand side are telescopic with respect to $j$. We sum this relation with respect to $j$ from $0$ to $J$ and make 
use of the relations $u_{-1}=0$, $u_{J+1}=u_J$ to simplify the \emph{boundary} terms (that are obtained after summing the last two lines). We get:
\begin{align*}
\sum_{j=0}^J \, v_j^2 \, - \, \sum_{j=0}^J \, u_j^2 \, =& \, -\dfrac{\nu-(\lambda \, a)^2}{2} \,  \sum_{j=0}^J \, (u_j-u_{j-1})^2+(u_{j+1}-u_j)^2 \\
&+\dfrac{\nu^2-(\lambda \, a)^2}{4} \, \sum_{j=0}^J \, (u_{j+1}-2 \, u_j+u_{j-1})^2 -(\lambda \, a) \, u_J^2 -\dfrac{\nu \, (1-\lambda \, a)}{2} \, u_0^2 \, .
\end{align*}

From our assumption on $\lambda \, a$ and $\nu$, the very two last terms in the second line on the right-hand side are nonpositive, so we have:
\begin{align*}
\sum_{j=0}^J \, v_j^2 \, - \, \sum_{j=0}^J \, u_j^2 \, \le& \, -\dfrac{\nu-(\lambda \, a)^2}{2} \,  \sum_{j=0}^J \, (u_j-u_{j-1})^2+(u_{j+1}-u_j)^2 \\
&+\dfrac{\nu^2-(\lambda \, a)^2}{4} \, \sum_{j=0}^J \, (u_{j+1}-2 \, u_j+u_{j-1})^2 \, .
\end{align*}
The inequality \eqref{ineg-lem1} follows in a straightforward way in the case $\nu \in [(\lambda \, a)^2,\lambda \, a]$ for in that case the right-hand 
side is the sum of two nonpositive quantities. We thus now examine the final case $\nu \in [\lambda \, a,1]$, and we use the inequality\footnote{This 
is a mere consequence of the inequality $(a-b)^2 \le 2 \, a^2+2 \, b^2$.}:
$$
(u_{j+1}-2 \, u_j+u_{j-1})^2 \, \le \, 2 \, (u_j-u_{j-1})^2 + 2 \, (u_{j+1}-u_j)^2 \, ,
$$
to get:
$$
\sum_{j=0}^J \, v_j^2 \, - \, \sum_{j=0}^J \, u_j^2 \, = \, -\dfrac{\nu-\nu^2}{2} \, \sum_{j=0}^J \, (u_j-u_{j-1})^2+(u_{j+1}-u_j)^2 \le 0 \, ,
$$
which shows again the validity of \eqref{ineg-lem1}. The proof of Lemma \ref{lem1} is complete.
\end{proof}

\noindent One can wonder whether the power boundedness of $A$ is linked to our choice $k=1$. If we had chosen $k=2$ in \eqref{schema-Neumann}, 
which corresponds to a second order extrapolation at the outflow boundary, the corresponding matrix $A$ would have read:
\begin{equation*}
A \, = \, \begin{pmatrix}
a_0 & a_1 & 0 & \cdots & \cdots & \cdots & 0 \\
a_{-1} & a_0 & a_1 & \ddots & & & \vdots \\
0 & \ddots & \ddots & \ddots & \ddots & & \vdots \\
\vdots & \ddots & \ddots  & \ddots & \ddots & \ddots & \vdots \\
\vdots & & \ddots & a_{-1} & a_0 & a_1 & 0 \\
\vdots & & & 0 & a_{-1} & a_0 & a_1 \\
0 & \cdots & \cdots & \cdots & 0 & a_{-1}-a_1 & a_0+2\, a_1 \end{pmatrix} \in \mathscr{M}_{J+1}(\R) \, ,
\end{equation*}
and it is then a new problem to determine whether $A$ is power bounded (under the same restrictions on $\lambda \, a$ and $\nu$ or under 
more severe restrictions). It is actually shown in \cite{CL-note} that the above matrix $A$ is power bounded under the very same restrictions 
on $\lambda \, a$ and $\nu$ as in Lemma \ref{lem1}. The proof however relies on a suitable \emph{modification} of the $\ell^2$ norm close 
to the outflow boundary, which is in the same spirit as the analysis in \cite{strand}. This gives, once again, a bound for the powers of $A$ that 
is even uniform with respect to the dimension $J$ but the above argument does not extend in a straightforward way. We do not know whether 
such \emph{energy} arguments can be used for three point schemes and any a priori given value of $k$, but the analysis in \cite{Benoit} 
suggests that the corresponding matrix $A$ should be power bounded. This is left to further study.

We discuss below in Section \ref{section3} why the simple scenario of Lemma \ref{lem1} does not extend to arbitrary stencils, but before 
that, we shall review some known facts on the numerical scheme \eqref{schema-int}, \eqref{schema-Dirichlet}, \eqref{schema-Neumann} for 
arbitrary $r,p$ and $k$.

\subsection{The general case}

This section summarizes the main results of \cite{CL-KRM}. In addition to \eqref{consistency}, we shall from now on assume that the coefficients 
$a_\ell$ satisfy the following (von Neumann) stability condition.

\begin{assumption}
\label{hyp:vonNeumann}
There holds:
$$
\sup_{\theta \in \R} \left| \sum_{\ell=-r}^p \, a_\ell \, {\rm e}^{\mbi \, \ell \, \theta} \right| \, = \, 1 \, .
$$
\end{assumption}

Under Assumption \ref{hyp:vonNeumann}, it is known that the convolution operator:
\begin{equation}
\label{operator-T}
\mathscr{T} \, : \, (u_j)_{j \in \Z} \in \ell^2(\Z;\R) \quad \longmapsto \quad \left( \sum_{\ell=-r}^p \, a_\ell \, u_{j+\ell} \right)_{j \in \Z} \in \ell^2(\Z;\R) 
\end{equation}
has norm $1$ and is therefore power bounded. Furthermore, the one-sided problem with Dirichlet boundary conditions on the left is also known 
to be a contraction. Namely, it is also known (see \cite{Wu,CG}) that the solution to the numerical scheme:
\begin{equation}
\label{schema-aux-1}
\begin{cases}
u_j^{n+1} \, = \, \sum_{\ell=-r}^p \, a_\ell \, u_{j+\ell}^n \, , & j \in \N \, ,\quad n \in \N \, ,\\
u_\mu^n \, = \, 0 \, , & \mu=-r,\dots,-1 \, ,\quad n \in \N \, ,
\end{cases}
\end{equation}
satisfies:
$$
\forall \, n \in \N \, ,\quad \sum_{j \in \N} \, (u_j^{n+1})^2 \, \le \, \sum_{j \in \N} \, (u_j^n)^2 \, .
$$
This contraction property holds because the time iteration in \eqref{schema-aux-1} can be written under the form $\pi \, \mathscr{T} \, \pi$ 
where $\pi$ denotes the \emph{orthogonal} projection in $\ell^2(\Z;\R)$ on the sequences that are supported on $\N$, and $\mathscr{T}$ is 
the pure convolution operator given in \eqref{operator-T} whose norm equals $1$. We should therefore always keep in mind that prescribing 
the Dirichlet boundary condition makes the $\ell^2$ norm decrease since it acts as an orthogonal projection and therefore preserves stability.

On the other hand, the outflow problem with extrapolation boundary condition:
\begin{equation}
\label{schema-aux-2}
\begin{cases}
u_j^{n+1} \, = \, \sum_{\ell=-r}^p \, a_\ell \, u_{j+\ell}^n \, , & j \le J \, ,\quad n \in \N \, ,\\
D_-^k u_{J+\mu}^n \, = \, 0 \, , & \mu=1,\dots,p \, ,\quad n \in \N \, ,
\end{cases}
\end{equation}
has also been analyzed in \cite{CL-KRM} where it is shown (under Assumption \ref{hyp:vonNeumann} and the consistency conditions 
\eqref{consistency}) that there exists a constant $C$ (that only depends on the given extrapolation order $k$) such that, independently 
of the initial condition, there holds:
$$
\forall \, n \in \N \, ,\quad \sum_{j \le J} \, (u_j^n)^2 \, \le \, C \, \sum_{j \le J} \, (u_j^0)^2 \, ,
$$
for any solution to \eqref{schema-aux-2}. In other words, the time evolution operator in \eqref{schema-aux-2} is power bounded on $\ell^2 
((-\infty,J];\R)$, even though it is not necessarily a contraction for the $\ell^2$ norm.

In view of the above two results, it is only the coupling between the Dirichlet and the Neumann condition that may create a large time 
exponential instability. Namely, the combination of the above two stability results (the one for Dirichlet and the one for the extrapolation 
condition) implies the following estimate for the matrix $A$ in \eqref{schema} (see the main result in \cite{CL-KRM}):
$$
\forall \, n \in \N \, ,\quad \| A^n \| \, \le \, C_0 \, {\rm e}^{n \, C_0/J} \, ,
$$
where the constant $C_0$ is independent of $n$ and $J$, and $\| \cdot \|$ is the matrix norm associated with the $\ell^2$ norm on vectors. 
In particular, the spectral radius of $A$ satisfies a bound of the form (here and from now on $\rho$ stands for the spectral radius of a matrix):
$$
\rho(A) \, \le \, 1 +\dfrac{C}{J} \, ,
$$
where $C$ is a constant that does not depend on $J$. Hence, if $A$ admits an unstable eigenvalue of modulus larger than $1$, it can not 
be ``too large''. Our goal is to make explicit whether such unstable eigenvalues do indeed occur.

\section{Examples of large time amplification}
\label{section3}

\subsection{The basic principles}

We make use of the theory of oscillating wave packets and shall use many times below the concept of \emph{group velocity}. This notion is 
discussed in details in \cite{Trefethen1} in the context of finite difference schemes and we refer to that reference for a detailed presentation. 
A convenient reference for the same notion in the context of partial differential equations is \cite{Whitham}. The application of these notions 
to numerical boundary conditions is the purpose of \cite{Trefethen2} and we shall also use the analysis of \cite{Trefethen2} as a black box 
in our (formal) arguments below. Namely, from Assumption \ref{hyp:vonNeumann}, we know that there are some values of $\theta \in \R$ 
such that the so-called \emph{amplification factor}:
$$
\sum_{\ell=-r}^p \, a_\ell \, {\rm e}^{\mbi \, \ell \, \theta}
$$
has modulus $1$. For instance, the consistency conditions \eqref{consistency} show that this is the case at $\theta=0$. From the von Neumann 
condition (that is, Assumption \ref{hyp:vonNeumann}), such modes are the only ones that are not exponentially damped by the numerical scheme. 
Let $\underline{\theta}$ be such a real number and let us set:
$$
\underline{z} \, := \, \sum_{\ell=-r}^p \, a_\ell \, {\rm e}^{\mbi \, \ell \, \underline{\theta}} \in \cercle \, .
$$
Then the plane wave:
$$
(n,j) \in \N \times \Z \, \longmapsto \, \underline{z}^n \, {\rm e}^{\mbi \, j \, \underline{\theta}} \, ,
$$
is a solution to the numerical scheme \eqref{schema-int}, at least far from the boundaries. Writing $j \, \underline{\theta}=(j \, \Delta x) \, 
\underline{\theta}/\Delta x$, we can view the mesh width $\Delta x$ as a small wavelength parameter. The theory developed in \cite{Trefethen1} 
shows that slowly modulated highly oscillating wave packets of the form:
\begin{equation}
\label{wavepacket}
\Phi(j\, \Delta x-{\bf v}_g \, n \, \Delta t) \, \underline{z}^n \, {\rm e}^{\mbi \, j \, \underline{\theta}}
\end{equation}
may be propagated by the numerical scheme \eqref{schema-int} for a well-chosen group velocity ${\bf v}_g$. The (smooth) function $\Phi$ 
describes the \emph{envelope} of the oscillations. In the trivial case $\underline{\theta}=0$, $\underline{z}=1$, one recovers the propagation 
of a smooth profile by a stable and consistent approximation of the transport equation; the group velocity ${\bf v}_g$ equals the transport 
velocity $a$ of \eqref{transport} in that case. The above expression \eqref{wavepacket} does not exactly provide with a solution to \eqref{schema-int} 
but it gives its \emph{leading behavior} if one has in mind that the solution to \eqref{schema-int} has an asymptotic expansion with respect to 
the small parameter $\Delta x$. This is exactly the same kind of arguments as in the so-called WKB analysis of linear geometric optics 
\cite{Lax}.

The boundary conditions on either side of the interval, meaning either \eqref{schema-Dirichlet} or \eqref{schema-Neumann}, will now give rise 
to wave packet \emph{reflection} as follows:
\begin{itemize}
 \item We start at time $t=0$ with some slowly modulated highly oscillating wave packet \eqref{wavepacket} that is supported in the middle of 
 the computation interval $(0,L)$ (take for instance an envelope function $\Phi$ that is supported in $[L/3,2L/3]$).
 \item Assume that the numerical wave packet \eqref{wavepacket} has a positive group velocity ${\bf v}_g$, so that it will eventually reach the 
 right boundary $x=L$ at some positive time.
 \item When hitting the boundary, the wave packet acts as a forcing term in the Neumann condition \eqref{schema-Neumann} of the form 
 $\alpha_\mu \, \underline{z}^n$, $\mu=1,\dots,p$, where $\alpha_\mu$ is computed by applying the Neumann condition to the wave packet 
 itself. One should then determine whether the numerical scheme \eqref{schema-int} supports wave packets of the form:
$$
\underline{z}^n \, \kappa^j \, ,
$$
 with $|\kappa| \ge 1$ that will be reflected ``backwards''. In the limit case $|\kappa| = 1$, the associated group velocity of the wave packet should 
 be nonpositive so that the wave packet will propagate towards the left.
 \item In the case where in the previous reflection process, one of the group velocities is negative, the corresponding reflected wave packet will 
 eventually hit the left boundary $x=0$ and a similar reflection process will take place (the corresponding selection for the $\kappa$'s is now 
 $|\kappa| \le 1$, and in the limit case $|\kappa| = 1$, the associated group velocity of the wave packet should be nonnegative).
\end{itemize}

Several points should be kept in mind : if a reflection gives rise to several wave packets, they will each carry part of the energy of the solution. 
In order to make the $\ell^2$ norm of the whole solution increase, it is desirable to have a sequence of reflections such that after one reflection 
on the right and one on the left, a wave packet with a given frequency $\underline{\theta}$ and positive group velocity has its amplitude multiplied 
by a factor that has modulus larger than $1$. Unless some cancelations appear, the wave packet \eqref{wavepacket} will necessarily be generated 
after reflection on the left of the interval since it has a positive group velocity. Observe now that it takes $O(J)$ time steps to make the whole transport 
back and forth from left to right, so an amplification pattern as described above should lead to a solution whose $\ell^2$ norm grows at least like 
$\exp(c \, n/J)$ for some positive constant $c$. This is precisely the situation which we try to highlight here for, in that case, we expect the spectral 
radius of $A$ to be larger than $1$.

In what follows, we seek for such exponential amplifications by using as an initial condition a smooth profile, that is $\underline{\theta}=0$, 
$\underline{z}=1$. To make the above mechanism work, the numerical scheme should admit an oscillating wave packet:
$$
1^n \, \kappa^j
$$
with $|\kappa|=1$ and a \emph{negative} group velocity ($\kappa$ is necessarily not equal to $1$ for in that case the group velocity equals $a$ 
and is therefore positive). However, one should observe that when the Neumann condition \eqref{schema-Neumann} is applied to a smooth 
wave of the form $\Phi(j\, \Delta x-a \, n \, \Delta t)$, the corresponding error is $O(\Delta x^k)$ so the forcing term in \eqref{schema-Neumann} 
will have a very small reflection coefficient. Since the Dirichlet condition makes the $\ell^2$ norm decrease, this will not be sufficient to trigger 
an instability. We therefore need another \emph{auxiliary} wave packet that is associated with a \emph{nonzero} frequency and that has a 
\emph{positive} group velocity.

Let us therefore summarize what we need to display some two boundary large time instability :
\begin{enumerate}
 \item A finite difference scheme \eqref{schema-int} that satisfies the conditions \eqref{consistency} and the von Neumann stability condition 
 (Assumption \ref{hyp:vonNeumann}).
 \item The finite difference scheme should also admit one plane wave of the form $1^n \, \kappa_1^j$ that is associated with a negative group 
 velocity ${\bf v}_{g,1}$ and another plane wave of the form $1^n \, \kappa_2^j$ that is associated with a positive group velocity ${\bf v}_{g,2}$, 
 both $\kappa_1$ and $\kappa_2$ being distinct from $1$.
 \item If the reflection coefficient from the first wave to the second on the right boundary is denoted $\alpha_{1\to2}$, and if the reflection 
 coefficient from the second wave to the first on the right boundary is denoted $\beta_{2\to1}$, then we wish $|\alpha_{1\to2} \beta_{2\to1}|>1$.
\end{enumerate}

\subsection{An example with first order extrapolation}

In this first example, we have $r=p=7$. The scheme reads as in \eqref{schema-int} with the following choice for the coefficients:
\begin{subequations}\label{coeff1}
\begin{align}
a_{-7}&=\dfrac{20133}{704759} \, ,\quad & a_{-6}&=-\dfrac{30433}{894654} \, ,\quad & a_{-5}&=\dfrac{20476}{371655} \, ,
\quad & a_{-4}&=\dfrac{12703}{838076} \, ,\\
a_{-3}&=\dfrac{75599}{770583} \, ,\quad & a_{-2}&=\dfrac{31384}{945409} \, ,\quad & a_{-1}&=-\dfrac{4015}{85641} \, ,
\quad & a_0&=\dfrac{261251}{274289} \, ,\\
a_1&=\dfrac{53837}{928392} \, ,\quad & a_2&=-\dfrac{27478}{587215} \, ,\quad & a_3&=-\dfrac{54064}{714213} \, ,
\quad & a_4&=-\dfrac{27635}{674698} \, ,\\
a_5&=-\dfrac{31244}{798847} \, ,\quad & a_6&=\dfrac{23091}{711760} \, ,\quad & a_7&=\dfrac{1864}{178339} \, .
\quad & &
\end{align}
\end{subequations}
The corresponding amplification factor is depicted in Figure~\ref{fig:spectrum1}. We implement this numerical scheme with the Dirichlet boundary 
condition \eqref{schema-Dirichlet} on the left of the interval and the Neumann condition  \eqref{schema-Neumann} on the right  ($k=1$), 
that is:
$$
\forall \, n \in \N \, ,\quad u_{J+1}^n=\cdots=u_{J+7}^n=u_J^n \, .
$$

\begin{figure}\centering
\includegraphics[scale=0.45]{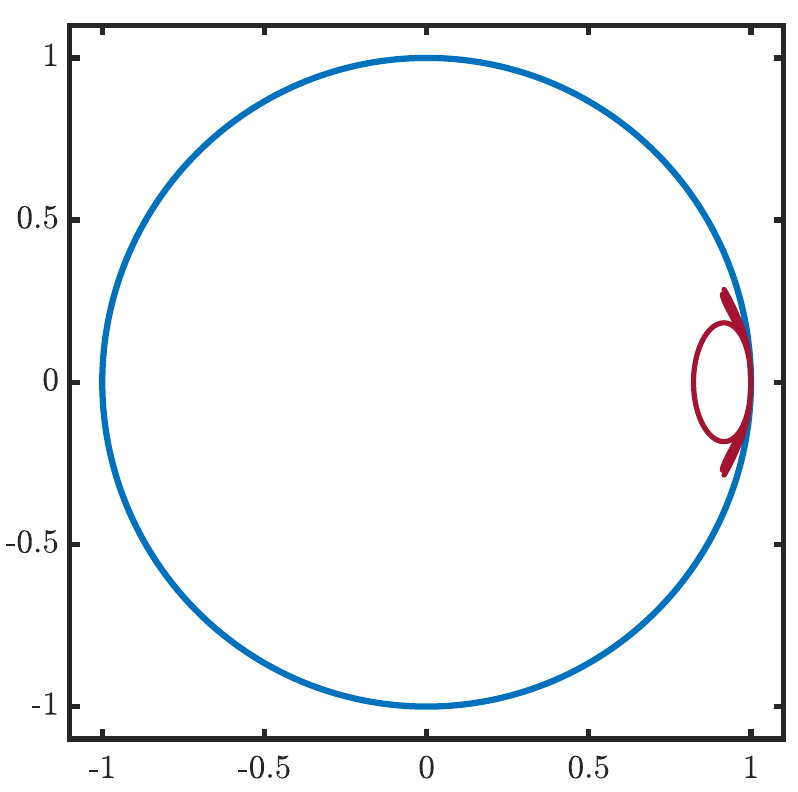}\hspace{1cm}
\includegraphics[scale=0.45]{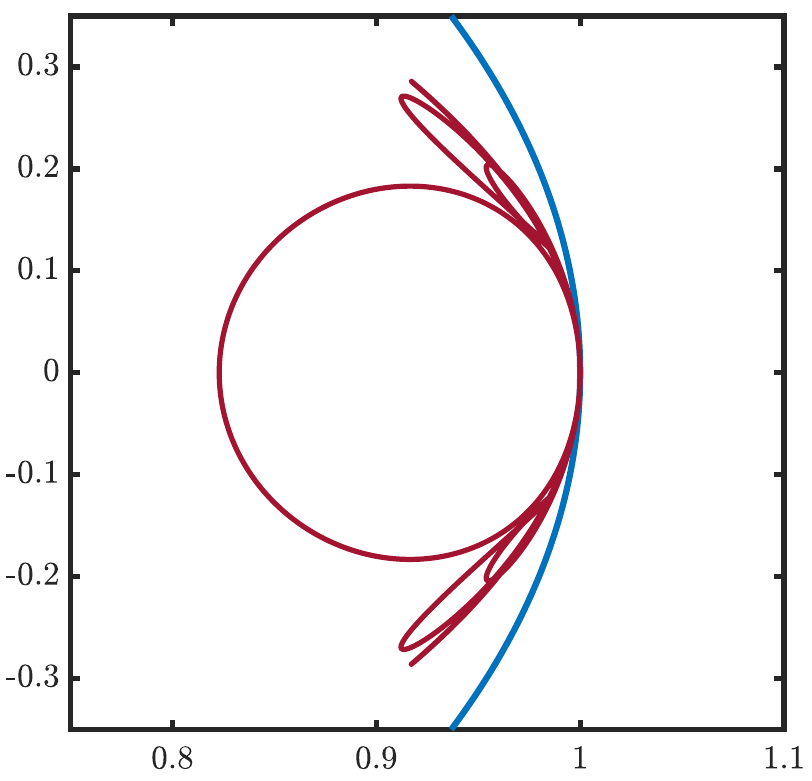}
\caption{Left: amplification factor (dark red curve) $\theta \mapsto \sum_{\ell=-r}^p \, a_\ell \, {\rm e}^{\mbi \, \ell \, \theta}$ with $r=p=7$ and coefficients 
$a_\ell$ given in \eqref{coeff1} within the unit circle $\mathbb{S}^1$ (blue curve). Right: zoom of the amplification factor near the tangency point at $z=1$.}
\label{fig:spectrum1}
\end{figure}

The above numerical scheme is devised in such a way that it supports the following wave packets with given frequencies and group velocities:
\begin{align*}
(\underline{\kappa}_1,{\bf v}_{g,1})&=(1,1) \, ,& & \\
(\underline{\kappa}_2,{\bf v}_{g,2})&=({\rm e}^{\mbi 0.288 \pi},-1) \, ,\quad &
(\underline{\kappa}_3,{\bf v}_{g,3})&=({\rm e}^{-\mbi 0.288 \pi},-1) \, ,\\
(\underline{\kappa}_4,{\bf v}_{g,4})&=({\rm e}^{\mbi 0.82 \pi},1) \, ,\quad &
(\underline{\kappa}_5,{\bf v}_{g,5})&=({\rm e}^{-\mbi 0.82 \pi},1) \, .
\end{align*}
For all the above five values of $\kappa$, the associated amplification factor $z$ equals $1$ so all wave packets are associated with the 
\emph{same} time frequency, which allows for boundary reflection back and forth. If one starts from a smooth initial condition, the wave packet 
associated with the zero frequency $(\underline{\kappa}_1,{\bf v}_{g,1})$ will first travel to the right. Its reflection on the right boundary of the 
interval will give rise to the two wave packets associated with $(\underline{\kappa}_2,{\bf v}_{g,2})$ and $(\underline{\kappa}_3,{\bf v}_{g,3})$, 
both having small amplitudes since any smooth wave satisfies the Neumann condition with consistency error $O(\Delta x)$. When those reflected 
wave packets will hit the left boundary of the interval, the wave packet associated with $(\underline{\kappa}_1,{\bf v}_{g,1})$ will be generated 
again together with the wave packet associated with $(\underline{\kappa}_4,{\bf v}_{g,4})$ and $(\underline{\kappa}_5,{\bf v}_{g,5})$.

\begin{figure}\centering
\includegraphics[scale=0.45]{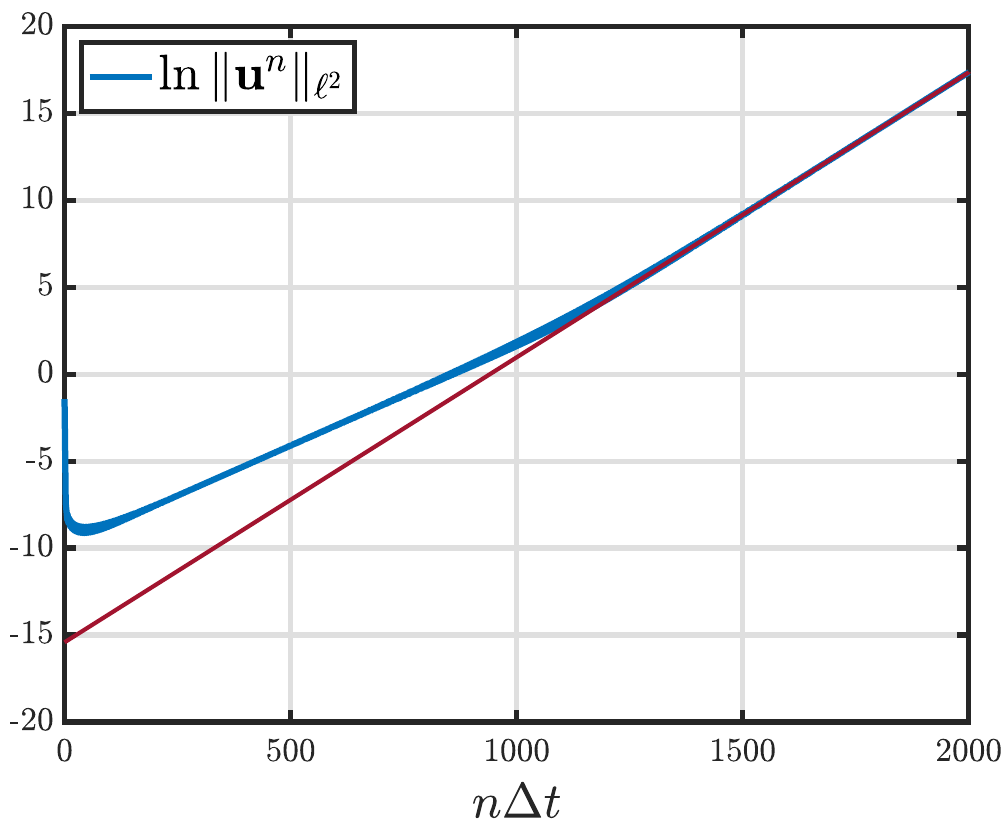}
\caption{Evolution of $\ln \|\mathbf{u}^n\|_{\ell^2}$ as a function of $n\Delta t$ for the solution $\mathbf{u}^n=(u_j^n)_{j=0,\cdots,J}$ of the numerical 
scheme starting from a smooth initial condition with the Dirichlet boundary  condition \eqref{schema-Dirichlet} on the left of the interval and the Neumann 
condition  \eqref{schema-Neumann} on the right  ($k=1$) together with the coefficients given in \eqref{coeff1}. Here, we have set $J=994$ with  $\Delta t 
=\Delta x=1/(J+1)$.}
\label{fig:norm1smooth}
\end{figure}

The amplification here will arise from the two pairs $(\underline{\kappa}_2,\underline{\kappa}_3)$ and $(\underline{\kappa}_4,\underline{\kappa}_5)$ 
of oscillating wave packets propagating back and forth from $0$ to $1$. We emphasize that the amplification is rather low, and we refer to 
Figure~\ref{fig:norm1smooth} for an illustration of the evolution of the logarithmic quantity $\ln \|\mathbf{u}^n\|_{\ell^2}$ for the solution of the 
numerical scheme $\mathbf{u}^n=(u_j^n)_{j=0,\cdots,J}$ as a function of $n\Delta t$ when the numerical scheme is initialized with the following 
sequence:
\bqs
\forall \, j=0,\cdots, J\,, \quad u_j^0 = f(x_j),\, \text{ with } \, f(x):=\exp\left(-50\left(x-\frac{1}{2}\right)^2\right)\,.
\eqs
Here, we have used the following $\ell^2$ norm:
\bqs
\|\mathbf{u}^n\|_{\ell^2}:= \left( \Delta x \, \sum_{j=0}^J \left|u_j^n\right|^2\right)^{1/2}.
\eqs
We observe, as expected, that it takes $O(1/\Delta x)$ amount of time for the instability to take off. The numerically computed slope of the linear 
regression of $\ln \|\mathbf{u}^n\|_{\ell^2}$ as a function of $n\Delta t$ is $1.63818 \times 10^{-2}$ which compares very well to:
\bqs
\frac{\rho(A)-1}{\Delta x} \sim 1.63995 \times 10^{-2},
\eqs
where we denoted by $\rho(A)$ the spectral radius of the matrix $A$ of the numerical scheme. This phenomenon can be expected from the so-called 
power method for computing the largest eigenvalue of a matrix.

In order to better illustrate the reflection of the wave packets, Figure~\ref{fig:norm1} shows the evolution of the solution of the numerical scheme 
initialized with the following (slowly modulated, highly oscillating) sequence:
\bqs
\forall \, j=0,\cdots, J\,, \quad u_j^0 = f(x_j),\, \text{ with } \, 
f(x):=\cos\left(\frac{0.82\pi}{\Delta x}\left(x-\frac{1}{2}\right)\right)\exp\left(-50\left(x-\frac{1}{2}\right)^2\right)\,,
\eqs
which corresponds to the linear superposition of the slowly modulated highly oscillating wave packets $(\underline{\kappa}_4,{\bf v}_{g,4})$ and 
$(\underline{\kappa}_5,{\bf v}_{g,5})$. As expected, we observe a reflection of the wave packets with an amplification over time.

\begin{figure}\centering
\includegraphics[scale=0.45]{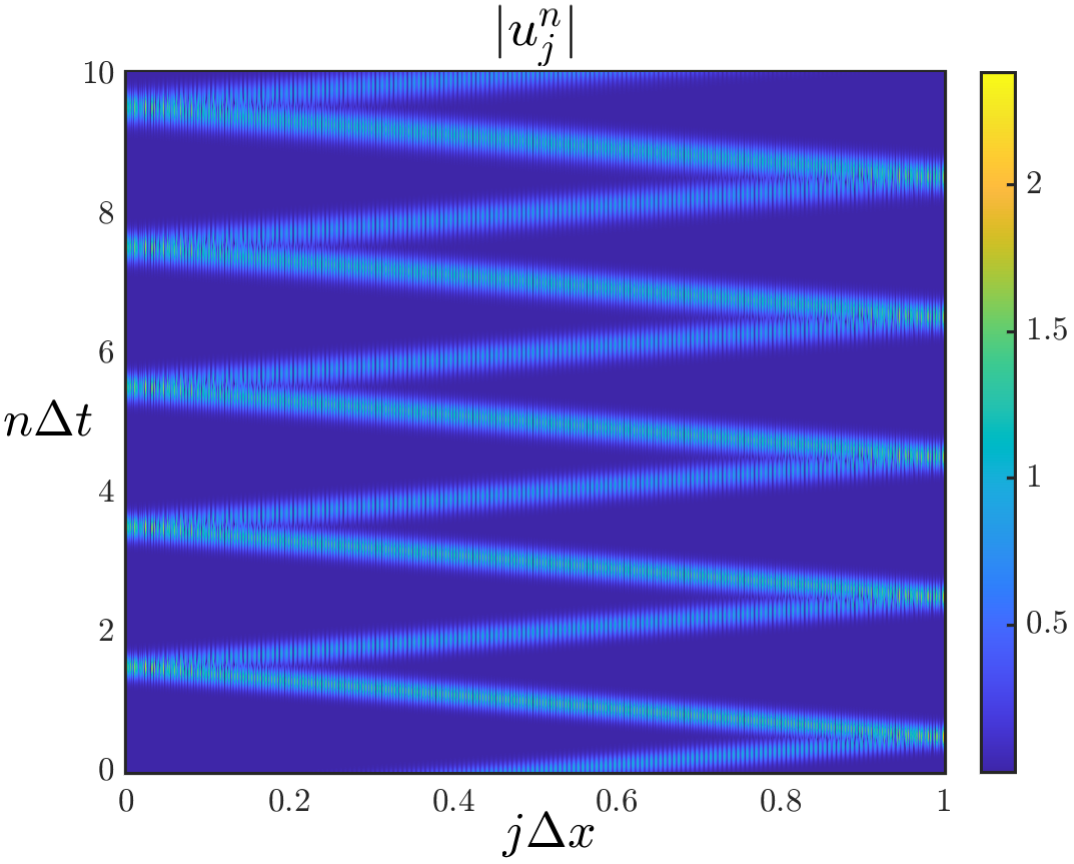}\hspace{1cm}
\includegraphics[scale=0.45]{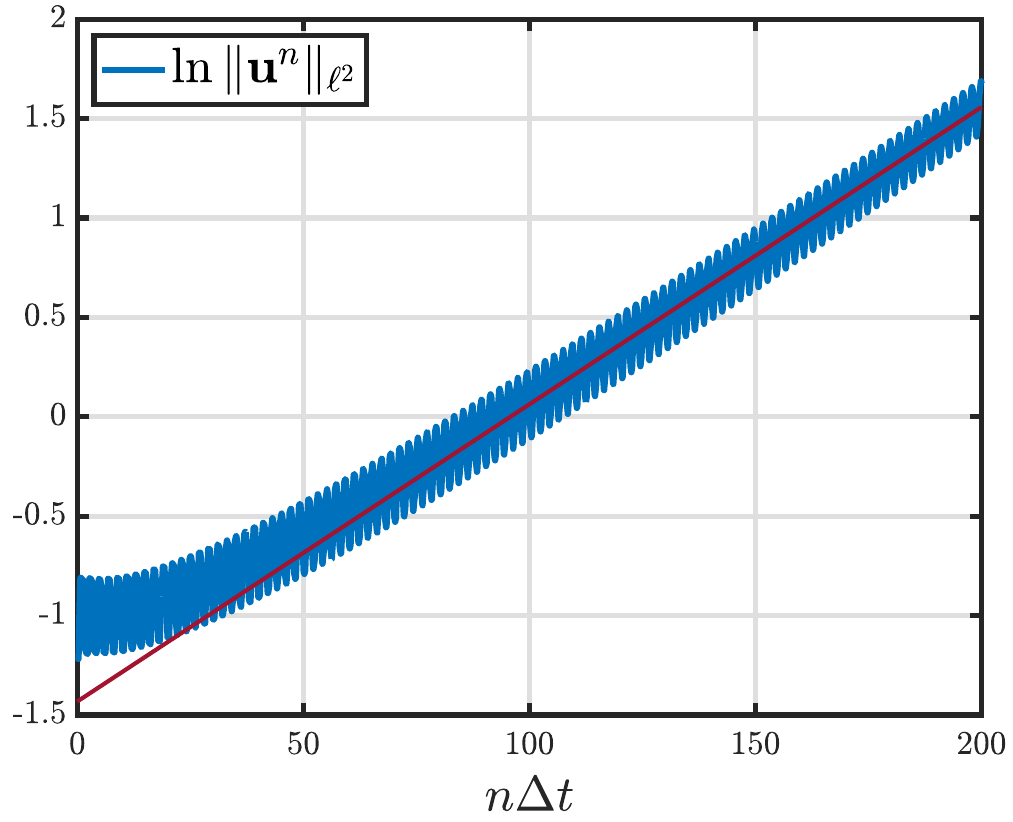}
\caption{Left: space-time evolution of $|u_j^n|$, solution of the numerical scheme with the Dirichlet boundary 
condition \eqref{schema-Dirichlet} on the left of the interval and the Neumann condition  \eqref{schema-Neumann} on the right  ($k=1$) together with 
the coefficients given in \eqref{coeff1}. Right: evolution of $\ln \|\mathbf{u}^n\|_{\ell^2}$ as a function of $n\Delta t$. Here, we have set $J=994$ with 
$\Delta t=\Delta x=1/(J+1)$.}
\label{fig:norm1}
\end{figure}

\subsection{An example with second order extrapolation}

In this second example, we still have $r=p=7$. The scheme reads as in \eqref{schema-int} with the following choice for the coefficients:
\begin{subequations}\label{coeff2}
\begin{align}
a_{-7}&=\dfrac{17151}{869039} \, ,\quad & a_{-6}&=-\dfrac{9591}{473236} \, ,\quad & a_{-5}&=\dfrac{55269}{926798} \, ,
\quad & a_{-4}&=-\dfrac{1854}{92119} \, ,\\
a_{-3}&=\dfrac{8983}{71254} \, ,\quad & a_{-2}&=\dfrac{32579}{620922} \, ,\quad & a_{-1}&=-\dfrac{47474}{813983} \, ,
\quad & a_0&=\dfrac{739673}{796040} \, ,\\
a_1&=\dfrac{2966}{34841} \, ,\quad & a_2&=-\dfrac{19830}{397889} \, ,\quad & a_3&=-\dfrac{71152}{650153} \, ,
\quad & a_4&=-\dfrac{21338}{716071} \, ,\\
a_5&=-\dfrac{10189}{548431} \, ,\quad & a_6&=\dfrac{19029}{761263} \, ,\quad & a_7&=\dfrac{8820}{964529} \, .
\quad & &
\end{align}
\end{subequations}
The corresponding amplification factor is depicted in Figure~\ref{fig:spectrum2}. We implement this numerical scheme with the Dirichlet boundary 
condition \eqref{schema-Dirichlet} on the left of the interval and the second order extrapolation condition \eqref{schema-Neumann} 
on the right ($k=2$), that is:
$$
\forall \, n \in \N \, ,\quad u_{J+1}^n= 2 \, u_J^n -u_{J-1}^n \, ,\quad u_{J+2}^n= 2 \, u_{J+1}^n -u_J^n \, \dots
$$
which can be further simplified into:
$$
\forall \, n\in \N \, ,\quad \forall \, \mu =1,\cdots, p\,, \quad u_{J+\mu}^n= (\mu+1) \, u_J^n - \mu \, u_{J-1}^n \, .
$$

\begin{figure}\centering
\includegraphics[scale=0.45]{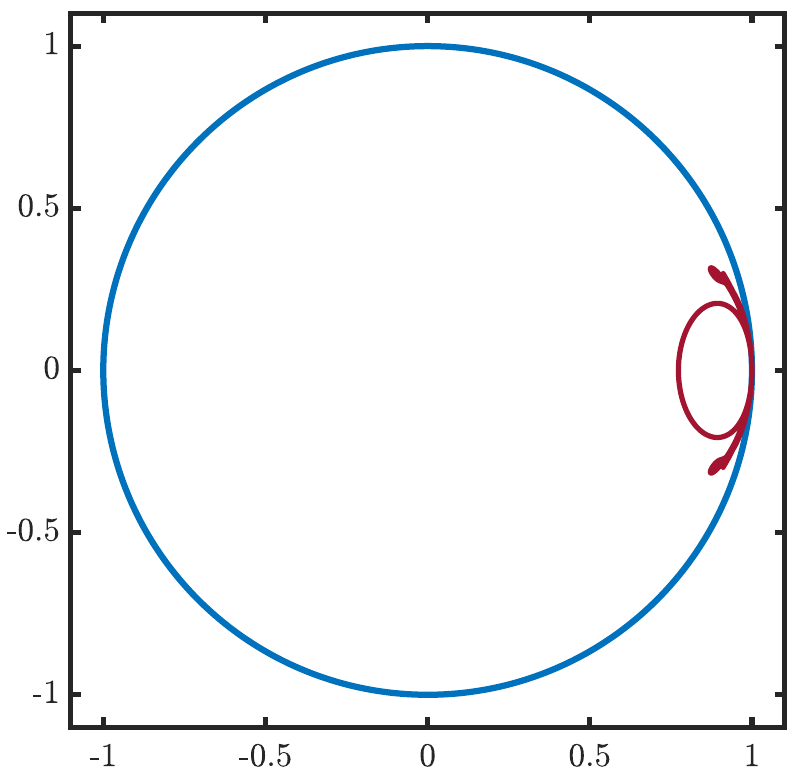}\hspace{1cm}
\includegraphics[scale=0.45]{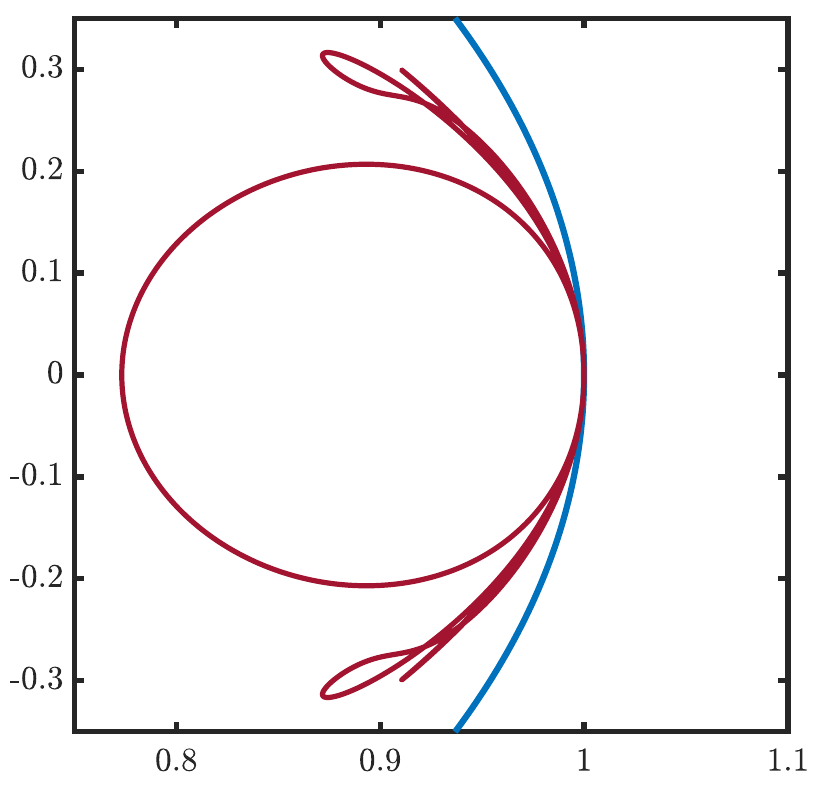}
\caption{Left: amplification factor (dark red curve) $\theta \mapsto \sum_{\ell=-r}^p \, a_\ell \, {\rm e}^{\mbi \, \ell \, \theta}$ with $r=p=7$ and 
coefficients $a_\ell$ given in \eqref{coeff2} within the unit circle $\mathbb{S}^1$ (blue curve). Right: zoom of the amplification factor near the 
tangency point at $z=1$.}
\label{fig:spectrum2}
\end{figure}

The above numerical scheme is devised in such a way that it supports the following wave packets with given frequencies and group velocities:
\begin{align*}
(\underline{\kappa}_1,{\bf v}_{g,1})&=(1,1) \, ,& & \\
(\underline{\kappa}_2,{\bf v}_{g,2})&=({\rm e}^{\mbi 0.3 \pi},-1) \, ,\quad &
(\underline{\kappa}_3,{\bf v}_{g,3})&=({\rm e}^{-\mbi 0.3 \pi},-1) \, ,\\
(\underline{\kappa}_4,{\bf v}_{g,4})&=({\rm e}^{\mbi 0.8 \pi},1) \, ,\quad &
(\underline{\kappa}_5,{\bf v}_{g,5})&=({\rm e}^{-\mbi 0.8 \pi},1) \, .
\end{align*}
For all the above five values of $\kappa$, the associated amplification factor $z$ equals $1$. The wave packet generation will thus be entirely 
similar as in our first example. Only the numerical values are different.

The amplification here will arise from the two pairs of oscillating wave packets propagating back and forth from $0$ to $1$. We emphasize 
that the amplification is stronger here than for the first order extrapolation. As could be expected, the reason is that the spectral radius of $A$ 
will be larger.

Figure~\ref{fig:norm2} shows the evolution of the solution of the numerical scheme initialized with the following (slowly modulated, highly oscillating) 
sequence:
\bqs
\forall \, j=0,\cdots, J\,, \quad u_j^0 = f(x_j),\, \text{ with } \, 
f(x):=\cos\left(\frac{0.8\pi}{\Delta x}\left(x-\frac{1}{2}\right)\right)\exp\left(-50\left(x-\frac{1}{2}\right)^2\right)\,,
\eqs
which corresponds to the linear superposition of the slowly modulated highly oscillating wave packets $(\underline{\kappa}_4,{\bf v}_{g,4})$ and 
$(\underline{\kappa}_5,{\bf v}_{g,5})$. We observe, as expected, a reflection of the wave packets with an amplification over time. The numerically 
computed slope of the linear regression of $\ln \|\mathbf{u}^n\|_{\ell^2}$ as a function of $n\Delta t$ is $3.15834 \times 10^{-1}$ which compares 
very well to the computed value:
\bqs
\frac{\rho(A)-1}{\Delta x} \sim 3.15884 \times 10^{-1},
\eqs
where we denoted once again by $\rho(A)$ the spectral radius of the matrix $A$ of the numerical scheme. The amplification is indeed much stronger 
in the present case.

\begin{figure}\centering
\includegraphics[scale=0.45]{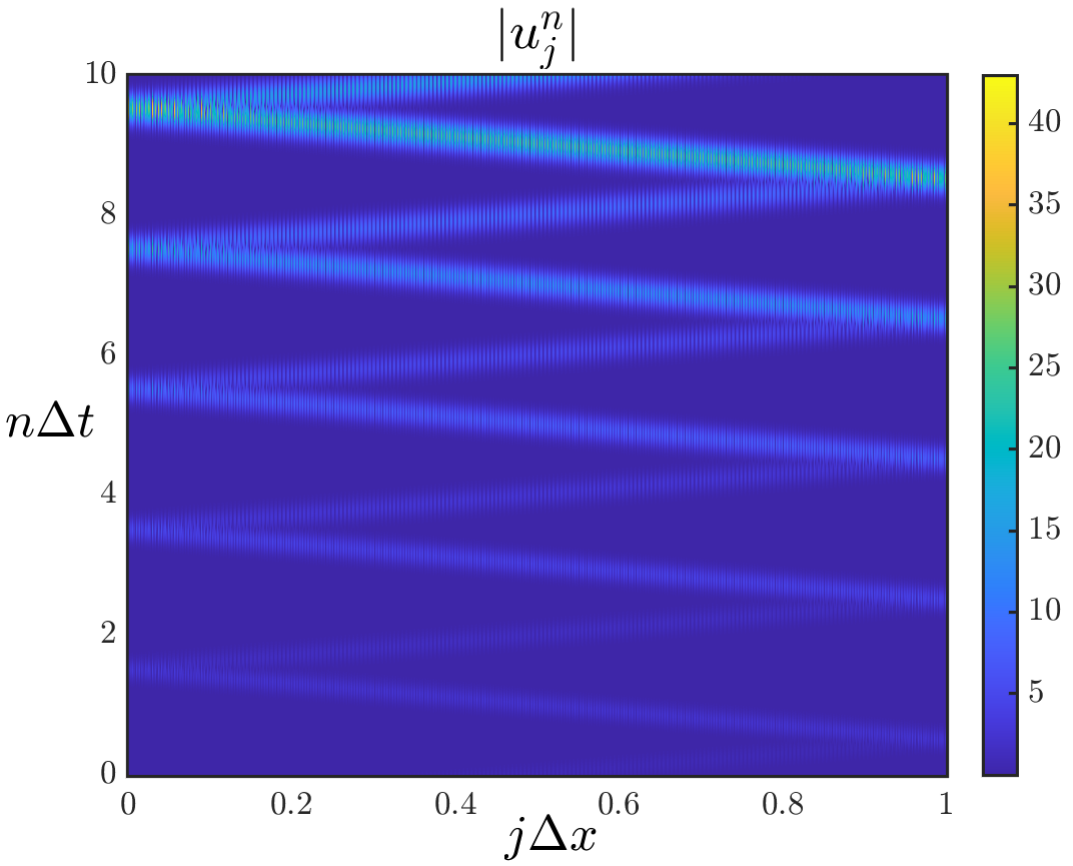}\hspace{1cm}
\includegraphics[scale=0.45]{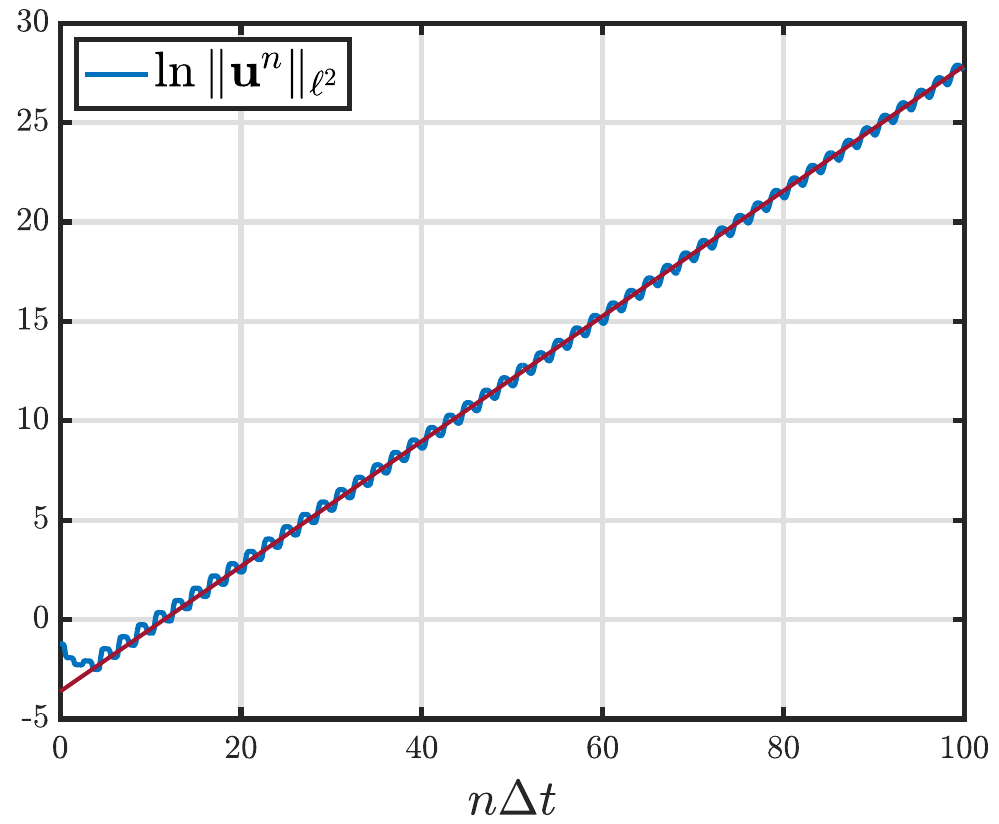}
\caption{Left: space-time evolution of $|u_j^n|$, solution of the numerical scheme with the Dirichlet boundary 
condition \eqref{schema-Dirichlet} on the left of the interval and the Neumann condition  \eqref{schema-Neumann} on the right  ($k=2$) together with 
the coefficients given in \eqref{coeff2}. Right: evolution of $\ln \|\mathbf{u}^n\|_{\ell^2}$ as a function of $n\Delta t$. Here, we have set $J=1000$ with 
$\Delta t=\Delta x=1/(J+1)$.}
\label{fig:norm2}
\end{figure}

\subsection{Discussion}

The above examples are meant to show that even though Dirichlet and Neumann numerical boundary conditions seem perfectly legitimate, 
large time stability issues may be an issue, even for one-step stable, explicit finite difference approximations of the transport equation. Still, 
the instability may be difficult to capture and it is quite unstable with respect to the various parameters, including the number of points $J$. 
If a large time instability occurs for some integer $J$, it may very well be that for $J+1$ the instability disappears. A connection with the 
spectral analysis of large Toeplitz matrices should be made.

Even though the reflection instability pattern is not hard to understand, there is no guarantee that any finite difference approximation with 
enough oscillating wave packets and appropriate group velocities will lead to an instability. To some extent, we have been lucky enough 
to obtain examples for $k=1$ and $k=2$.

Our main message is that the above examples show that the stability criterion exhibited in \cite{Benoit} may fail to be satisfied in several 
situations. This motivates setting numerical routines for trying to verify this criterion on high order schemes and more elaborate numerical 
boundary conditions.

\bibliographystyle{chicago}
\bibliography{Note}
\end{document}